\newtheorem{theorem}{Theorem}[section]
\newtheorem{lemma}[theorem]{Lemma}
\newtheorem{corollary}[theorem]{Corollary} 
\newtheorem{remark}[theorem]{Remark}
\newtheorem{proposition}[theorem]{Proposition}
\newcommand{\ZZ}{\mathbb{Z}}
\newcommand{\QQ}{\mathbb{Q}}
\title{Stable range one for rings with central units}
\author{Paula A.A.B. Carvalho}
\address{Departamento de Matem\'atica, Faculdade de Ci\^encias, Universidade do Porto, Rua Campo Alegre 687, 4169-007 Porto, Portugal}
\email{pbcarval@fc.up.pt}
\author{Christian Lomp}
\address{Departamento de Matem\'atica, Faculdade de Ci\^encias, Universidade do Porto, Rua Campo Alegre 687, 4169-007 Porto, Portugal}
\email{clomp@fc.up.pt}
\author{Jerzy Matczuk}
\address{Institute of Mathematics, Warsaw University, 02-097 Warsaw, ul.Banacha 2, Poland}
\email{jmatczuk@mimuw.edu.pl}
\begin{document}

\begin{abstract}
The purpose of this paper is to give a partial positive answer to a question raised by Khurana et al. as to whether a 
ring $R$ with stable range one and central units is commutative. We show that this is the case under any of the 
following additional conditions: $R$ is semiprime or $R$ is one-sided Noetherian or $R$ has unit-stable range $1$ or $R$ 
has classical Krull dimension $0$ or $R$ is an algebra over a field $K$ such that $K$ is uncountable and $R$ has only 
countably many primitive ideals or $R$ is affine and either $K$ has characteristic $0$ or has infinite 
transcendental degree over its prime subfield or is algebraically closed. However, the general question remains open.
\end{abstract}

\maketitle

\section{Introduction}
Let $R$ be an associative, unital, but not necessarily commutative ring and denote its center by $Z(R)$ and its group of units by $U(R)$. In \cite{KhuranaMarksSrivastava}, Khurana et al. named a ring {\it unit-central} if  $U(R)\subseteq Z(R)$. It is  easy to see that in this case any element of the Jacobson radical $J(R)$ of $R$ is central as well as any nilpotent and any idempotent element of $R$.  
%

Theorem 2.3 in \cite{KhuranaMarksSrivastava} shows that any unit-central semi-exchange ring is commutative. For example any ring that is algebraic over some central subfield is strongly $\pi$-regular and hence an exchange ring by \cite[Example 2.3]{Stock}. Thus unit-central rings that are algebraic over some central subfield are commutative.

Four questions about commutativity of unit-central rings were raised in  \cite{KhuranaMarksSrivastava}. In this paper we attempt  to answer \cite[Question 3.4]{KhuranaMarksSrivastava}, which asks whether  a unit-central ring of stable range $1$ is commutative. We briefly recall the definition of stable range. Following Bass, a sequence of elements $(a_1,\ldots, a_m)\in R^m$ of a ring $R$ and $m\geq 2$ is called unimodular if $\sum_{i=1}^m Ra_i = R$. A unimodular sequence $(a_1,\ldots, a_m)$ is called stable if there exist elements $b_1, \ldots, b_{m-1}$ such that $(a_1+b_1a_m, \ldots, a_{m-1} + b_{m-1}a_m) \in R^{m-1}$ is unimodular (see \cite[\S 4]{Bass}).  The least $n\geq 1$ such that any unimodular sequence of length $m>n$ is stable is called the stable range of $R$ and denoted by $sr(R)=n$. Note that the stable range condition is left-right symmetric (see \cite{Vaserstein}). This means that a ring $R$ has stable range $1$, i.e. $sr(R)=1$, if for all $a,b\in R$ with  $Ra + Rb = R$ there exists $u\in R$ such that $a+ub$ is a unit in $R$. 

\section{Utumi's $\xi$-rings}

The next Lemma shows that for any element $x$ in a unit-central ring $R$ of stable range $1$, there exist $c(x)\in R$ such that $x^2c(x)-x$ is central. Such rings had been termed $\xi$-rings by Utumi in \cite{Utumi} and this section will be used to recall some of their properties.

\begin{lemma} Any unit-central ring of stable range $1$ is a $\xi$-ring.
\end{lemma}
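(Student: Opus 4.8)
The plan is to produce the required element $c(x)$ directly from the stable range hypothesis, exploiting the fact that in a unit-central ring every unit lies in the center. The key observation is that the $\xi$-condition $x^2c(x)-x\in Z(R)$ is equivalent to asking that $1-x+x^2c(x)$ be a unit: indeed, if $v:=1-x+x^2c(x)$ is a unit, then $v\in Z(R)$ by unit-centrality, whence $x^2c(x)-x=v-1\in Z(R)$, since $1$ and $v$ are both central. So it suffices to find $c$ making $1-x+x^2c$ a unit.

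First I would record the unimodularity that feeds the stable range condition. From the identity $(1-x)(1+x)+x^2\cdot 1 = 1-x^2+x^2 = 1$ we see that $1\in(1-x)R+x^2R$, so the ordered pair $(1-x,\,x^2)$ is unimodular (with the first generator multiplied on the right by $1+x$ and the second by $1$). Since, as recalled in the Introduction, the stable range condition is left--right symmetric, I may apply the right-hand form of $sr(R)=1$ to this pair: there exists $c\in R$ such that $(1-x)+x^2c$ is a unit.

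It then remains only to invoke unit-centrality to close the argument. Setting $v=(1-x)+x^2c=1+(x^2c-x)$, the element $v$ is a unit, hence $v\in Z(R)$, and therefore $x^2c-x=v-1\in Z(R)$. Thus $c(x):=c$ witnesses the $\xi$-condition for $x$, and since $x$ was arbitrary, $R$ is a $\xi$-ring.

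The only real obstacle is getting the bookkeeping of sides right. The naive choices — completing $x$ through the trivial identity $x+(1-x)=1$, or using the pair $(x^2,\,1-x)$ — leave the stable-range multiplier attached to the wrong factor, producing a relation of the shape $x^2c-x=(\mbox{multiplier})-(\mbox{central})$, which is central only when the multiplier already is, something one cannot guarantee. The whole point is to order the pair as $(1-x,\,x^2)$, so that the multiplier multiplies $x^2$ on the right while the complementary generator $1-x$ supplies exactly the term $-x$ together with the central unit $1$; this is precisely why one uses the right-hand, rather than the literally stated left-hand, form of the stable range condition.
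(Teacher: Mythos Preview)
Your argument is correct and is essentially the paper's own proof: from the unimodularity of $(1-x,x^2)$ one applies stable range $1$ to obtain $c$ with $1-x+x^2c\in U(R)\subseteq Z(R)$, whence $x^2c-x\in Z(R)$. Your additional paragraph on the ordering of the pair and the use of the right-hand form is more careful bookkeeping than the paper provides, but the underlying idea is identical.
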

\begin{proof}
For any $x\in R$, $x^2R + (1-x)R=R$ holds and by the stable range condition, there exist $c(x)\in R$ with $x^2c(x) + 1 - x \in U(R) \subseteq Z(R)$. Hence $x^2c(x)-x\in Z(R)$.
\end{proof}

Note that any division ring is trivially a $\xi$-ring, but it is unit-central if and only if it is a field. Utumi's $\xi$-rings were studied by Martindale in \cite{Martindale}, where he proved the following properties of such rings:

\begin{proposition}[Martindale]\label{Martindale_primitive} Let $R$ be a $\xi$-ring. Then:
\begin{enumerate}
\item Any $x\in R$ commutes with $c(x)\in R$ that satisfies  $x^2c(x)-x\in Z(R)$.
\item Any idempotent and any nilpotent element of $R$ is central.
\item If $R$ is left or right primitive, then it is a division ring.
\item $R$ is a subdirect product of division rings and rings whose commutators are central.
\end{enumerate}
\end{proposition}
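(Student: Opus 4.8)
The four parts are tightly linked, and the plan is to prove them in the listed order, with (1) serving as the engine that drives (2) and (3), and (4) assembled structurally from (2) and (3). For (1), I would start from the defining relation: given $x$, fix any witness $c$ with $z:=x^2c-x\in Z(R)$. Since $z$ is central, $zx=xz$, and expanding $x^2cx-x^2 = zx = xz = x^3c-x^2$ yields $x^2cx=x^3c=x^2(xc)$, so every witness already satisfies $x^2(cx-xc)=0$. The real content of (1) is to upgrade this to an \emph{honest} commuting witness: the defining relation only controls $c$ after multiplication by $x^2$ on the left, whereas commutativity is a two-sided, ``degree-one'' condition, so one must correct $c$ by a term absorbing $[c,x]$ while keeping $x^2c-x$ central. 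I expect this to be the main obstacle. The naive correction $c\mapsto c-[c,x]$ does preserve the central relation (precisely because $x^2[c,x]=0$), but it need not commute with $x$, so a more careful, likely iterative, adjustment is needed; this is Martindale's lemma and I would follow his bookkeeping here.

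For (2), a square-zero element $n$ (so $n^2=0$) makes the defining relation degenerate to $-n=n^2c-n\in Z(R)$, so square-zero elements are \emph{automatically} central. Idempotents follow at once: for $e=e^2$ and any $r$, the elements $er(1-e)$ and $(1-e)re$ square to zero, hence are central; multiplying each by $e$ on the appropriate side forces it to vanish, giving $er=ere=re$, so $e$ is central. For a general nilpotent $n$ with $n^m=0$ I would induct on the index $m$. Using (1), choose $c$ commuting with $n$ and put $z=n^2c-n\in Z(R)$; then $n+z=n^2c$, and because $c$ commutes with $n$ we get $(n+z)^k=n^{2k}c^k$, so $n+z$ is nilpotent of index at most $\lceil m/2\rceil<m$. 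By induction $n+z$ is central, $z$ is central, and therefore $n=(n+z)-z$ is central.

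For (3), let $R$ be primitive, hence prime. By (2) every idempotent and every nilpotent is central, and in a prime ring this forces the only idempotents to be $0,1$ and the only nilpotent to be $0$. I would then use the standard fact that the center of a primitive ring is a field. Take $0\neq x$ and, by (1), a commuting witness $c$ with $z=x^2c-x\in Z(R)$. If $z\neq0$ it is a central unit, and the identities $x(xc-1)=z=(xc-1)x$ exhibit a right and a left inverse of $x$, so $x$ is a unit. If $z=0$ then $x^2c=x$ and $e:=xc$ satisfies $e^2=x^2c^2=(x^2c)c=xc=e$; thus $e$ is a central idempotent, which cannot be $0$ (else $x=x^2c=0$), so $e=1$ and $x$ is again a unit. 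Hence every nonzero element of $R$ is invertible, i.e.\ $R$ is a division ring.

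For (4), I would invoke Birkhoff's theorem to write $R$ as a subdirect product of subdirectly irreducible homomorphic images, each again a $\xi$-ring (the defining condition passes to quotients, since central elements map to central elements). It then suffices to classify a subdirectly irreducible $\xi$-ring $S$. If $S$ is semiprimitive then one of its primitive ideals must be zero: otherwise the monolith, lying in every nonzero ideal, would lie in $J(S)=\bigcap(\text{primitive ideals})=0$. So $S$ is primitive and, by (3), a division ring. Otherwise $J(S)\neq0$ contains the monolith, and in this case one must show that all commutators of $S$ are central, placing $S$ in the second family; here the fact from (2) that the nilpotents form a central nil ideal helps control the structure. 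Establishing this last case—that the non-division-ring factors have central commutators—is the delicate part of (4), after which the subdirect embedding into the product of these factors is immediate.
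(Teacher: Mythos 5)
A point of comparison first: the paper offers no argument for this proposition at all --- its ``proof'' is a citation of Martindale's original results (his Theorem~1, Lemmas~2 and~3, Theorem~2, and Main Theorem). Your attempt is therefore more self-contained than the paper where it succeeds, and part of it genuinely works: your proof of (2) is complete and correct modulo (1) --- square-zero elements are central straight from the defining relation, the $er(1-e)$ trick handles idempotents, and the induction on nilpotency index via $n+z=n^{2}c$, $(n+z)^{k}=n^{2k}c^{k}$ is sound. Your reductions in (4) are also fine: quotients of $\xi$-rings are again $\xi$-rings, and a semiprimitive subdirectly irreducible factor must be primitive, since otherwise the monolith would lie in $J(S)=0$. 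But three things separate the proposal from a proof, and one of them is an outright false step.

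The false step is in (3): the center of a primitive ring is \emph{not} in general a field, only a domain. For instance, with $V$ an infinite-dimensional $\QQ$-vector space, the ring $R=\ZZ\cdot 1+E$, where $E\subseteq \mathrm{End}_{\QQ}(V)$ is the ideal of finite-rank endomorphisms, is left primitive with center $\ZZ\cdot 1$. So your claim ``if $z\neq 0$ it is a central unit'' is unjustified; centrality only gives that $z$ acts bijectively on the faithful simple module $M$, whence $x$ acts bijectively on $M$ --- but for noncentral $x$ the set-theoretic inverse of $x|_{M}$ need not be $R$-linear, so invertibility in $R$ does not follow. The repair is a density argument in the spirit of Martindale's Theorem~2: granting (1), if $\dim_{D}M\geq 2$, pick $D$-independent $m_{1},m_{2}$ and, by density, $x\in R$ with $xm_{1}=m_{2}$, $xm_{2}=0$; a commuting witness gives $x^{2}c=cx^{2}\cdot$(nothing more needed than $x^{2}c=cx^{2}$), so $z=x^{2}c-x$ satisfies $zm_{2}=0$ yet $zm_{1}=-m_{2}\neq 0$, contradicting the injectivity of nonzero central elements on $M$; hence $\dim_{D}M=1$ and density makes $R\simeq \mathrm{End}_{D}(M)$ a division ring. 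The remaining two gaps are ones you flag yourself: (1) (the existence of a \emph{commuting} witness, on which both your nilpotent induction and the above fix of (3) depend) and the hard half of (4) (that the non-primitive subdirectly irreducible factors have central commutators) are deferred wholesale to ``Martindale's bookkeeping.'' Since the paper defers all four parts in exactly this way, that is a defensible choice --- but it means that, of the four statements, the proposal actually establishes only (2), and that conditionally on (1), while its one attempted shortcut, in (3), rests on a false fact.
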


\begin{proof}
Statements (1), (2), (3) resp. (4) follow from \cite[Theorem 1]{Martindale}, \cite[Lemma 2 and Lemma 3]{Martindale}, \cite[Theorem 2]{Martindale} resp. \cite[Main Theorem]{Martindale}. 
\end{proof}

Let $R$ be a $\xi$-ring and $x\in R$. Suppose that $x^2c(x)=x$. Then $(xc(x))^2 = xc(x)xc(x)=x^2c(x)^2=xc(x)$, since $x$ and $c(x)$ commute by Proposition \ref{Martindale_primitive}. The same Proposition says that idempotents in $\xi$-rings are central. Hence $xc(x)=c(x)x$ is central in case $x^2c(x)=x$.

We define the following function $\xi:R\rightarrow Z(R)$ for a $\xi$-ring $R$:
$$\xi (x) = \left\{ \begin{array}{ll} x & \mbox{ if } x\in Z(R)\\ xc(x) &  \mbox{ if } x\not \in Z(R) \mbox{ and } x^2c(x)=x\\  x^2c(x)-x &\mbox{otherwise}\end{array}\right.$$

The following lemma also appears in \cite[Lemma 2]{Utumi}.

\begin{lemma}[Utumi]\label{leftidealscontainideal}
Any non-zero left resp. right ideal of a $\xi$-ring $R$ contains a non-zero two-sided ideal generated by central elements. 
In particular, an element is a left zero divisor if and only if it is a right zero divisor.
\end{lemma}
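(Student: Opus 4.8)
The plan is to extract from any nonzero element of the ideal a single nonzero \emph{central} element that still lies inside the ideal, and then take the two-sided ideal it generates. The natural candidate is the value $\xi(x)$ of the map defined just above the statement: by construction $\xi(x)\in Z(R)$ in each of the three cases, and the trichotomy is arranged precisely so that $\xi(x)\neq 0$ whenever $x\neq 0$.

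First I would fix a nonzero left ideal $L$ and a nonzero $x\in L$, and check case by case that $\xi(x)$ actually belongs to $Rx$. Using that $x$ commutes with $c(x)$ (Proposition \ref{Martindale_primitive}(1)) one rewrites the three values as $x=1\cdot x$, as $xc(x)=c(x)x$, and as $x^2c(x)-x=(xc(x)-1)x$, each of which is visibly a left multiple of $x$; by symmetry each is also a right multiple of $x$, which will handle the right-ideal assertion. Hence $\xi(x)\in Rx\subseteq L$ (resp. $\xi(x)\in xR\subseteq L$). Since $\xi(x)$ is central and nonzero, $R\xi(x)=\xi(x)R=R\xi(x)R$ is a nonzero two-sided ideal, generated by the single central element $\xi(x)$, and contained in $L$.

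The one point requiring genuine care, and essentially the only obstacle, is the nonvanishing of $\xi(x)$ in the middle case $x^2c(x)=x$, where the ``generic'' value $x^2c(x)-x$ collapses to $0$. Here the definition replaces it by $xc(x)$, and I would argue $xc(x)\neq 0$ because otherwise $x=x^2c(x)=x\cdot xc(x)=0$, a contradiction. In the remaining two cases nonvanishing is immediate ($x\neq 0$, resp. $x^2c(x)\neq x$ by the definition of the ``otherwise'' branch), so this subtlety is already absorbed into the definition of $\xi$.

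For the final clause, suppose $ab=0$ with $b\neq 0$. Applying the above to $b$, the element $z=\xi(b)$ is a nonzero central element lying in $bR$, say $z=bt$; then $az=abt=0$, and centrality of $z$ forces $za=az=0$, so $a$ is a right zero divisor. The reverse implication is identical with the roles of left and right exchanged, now using $z=\xi(b)\in Rb$. Thus an element is a left zero divisor if and only if it is a right zero divisor.
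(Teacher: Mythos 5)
Your proof is correct and follows essentially the same route as the paper: both produce the nonzero central element $\xi(x)\in Rx\cap xR$ and use its centrality for the zero-divisor symmetry. You merely make explicit two points the paper leaves implicit, namely the case-by-case verification that $\xi(x)$ is a left and right multiple of $x$ (via $x^2c(x)-x=(xc(x)-1)x=x(c(x)x-1)$) and the nonvanishing of $xc(x)$ in the case $x^2c(x)=x$.
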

\begin{proof}
Any  non-zero element $x$ in a $\xi$-ring $R$ maps to a non-zero central element $\xi(x)$. Thus if $I$ is a non-zero left ideal of $R$, then $\xi(I)$ is a non-zero subset of $I\cap Z(R)$. If $x$ is a left zero divisor, then there exists $y\in R$ with $xy=0$. Thus also $\xi(y)x=x\xi(y)=0$, i.e. $x$ is also a right zero divisor. This argument is obviously symmetric as $\xi(y)$ is non-zero and central.
\end{proof}

Recall that a ring $R$ is called subdirectly irreducible if the intersection of its non-zero two-sided ideals is non-zero. The following lemma also follows partially from \cite[page 717]{Martindale}.

\begin{lemma} \label{zerodivisors} The set of all zero divisors of a subdirectly irreducible $\xi$-ring is a two-sided ideal which is maximal as left and right ideal.
\end{lemma}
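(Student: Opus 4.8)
The plan is to realize the set $D$ of all zero divisors as the annihilator of the monolith of $R$, and then to identify the module $R/D$ with that monolith. Since $R$ is subdirectly irreducible, the intersection $H$ of all its non-zero two-sided ideals is itself a non-zero two-sided ideal, namely the unique minimal one. First I would invoke Lemma \ref{leftidealscontainideal} applied to $H$ (a non-zero left ideal) to produce a non-zero central element $z \in H \cap Z(R)$; then $Rz$ is a non-zero two-sided ideal contained in $H$, so minimality of $H$ forces $H = Rz$.

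Next I would show that $D = \lann{R}{z} = \{x\in R : xz = 0\}$, which, because $z$ is central, is a two-sided ideal coinciding with $\rann{R}{z}$ and with the two-sided annihilator of $H$. The one non-obvious inclusion is that every zero divisor kills $z$: if $xy = 0$ with $y\neq 0$, then $\xi(y)$ is a non-zero central element lying in $yR$, so $x\xi(y) = 0$; the two-sided ideal $R\xi(y)$ is non-zero and therefore contains $H = Rz$, and centrality of $\xi(y)$ gives $xR\xi(y)=0$, whence $xz = 0$. The converse is immediate since $z\neq 0$, and the left–right interchangeability of zero divisors from Lemma \ref{leftidealscontainideal} handles the symmetric case. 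This already shows that $D$ is a two-sided ideal, proper because $1\notin D$.

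For the maximality I would consider the left $R$-module homomorphism $r\mapsto rz$ mapping $R$ onto $Rz = H$, whose kernel is exactly $D$; hence $R/D \cong H$ as left $R$-modules. It then suffices to see that $H$ is a simple left $R$-module, and this follows once more from Lemma \ref{leftidealscontainideal}: any non-zero left ideal $L\subseteq H$ contains a non-zero two-sided ideal, which by minimality of $H$ must equal $H$, forcing $L = H$. Consequently $R/D$ is a simple left module, so $D$ is a maximal left ideal; the symmetric computation with $r\mapsto zr$ and the right-ideal version of Lemma \ref{leftidealscontainideal} shows that $D$ is also a maximal right ideal.

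The main obstacle is the first reduction, namely converting a purely one-sided relation $xy=0$ into the annihilation of the single central generator $z$ of the heart. This is precisely where the $\xi$-function and subdirect irreducibility work together: $\xi$ replaces the arbitrary element $y$ by a non-zero \emph{central} element with the same annihilation behaviour, while minimality of $H$ upgrades ``annihilates some non-zero ideal'' to ``annihilates $H$''. After $D = \lann{R}{z}$ is in hand, the simplicity of $H$ (again a direct consequence of Utumi's lemma together with minimality) makes the maximality statements essentially formal.
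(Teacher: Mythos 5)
Your proposal is correct and follows essentially the same route as the paper: both identify the heart $H=Rz$ as a simple left and right ideal generated by a non-zero central element obtained via $\xi$, show the zero divisors coincide with $\mathrm{ann}(H)=\lann{R}{z}$, and deduce maximality from the isomorphism $R/\mathrm{ann}(H)\simeq H$. Your verification that $\xi(y)\in yR$ kills the only slightly delicate step, and it is exactly the mechanism implicit in the paper's appeal to Lemma \ref{leftidealscontainideal}.
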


\begin{proof}
Let $H(R)$ be the intersection of the non-zero two-sided ideals of $R$. By Lemma \ref{leftidealscontainideal}, any non-zero left resp. right ideal of $R$ contains a non-zero two-sided ideal and hence contains $H(R)$. Thus $H(R)$ is an essential left and right ideal of $R$.  For any non-zero $s\in H(R)$, $\xi(s)R=R\xi(s) = H(R)$ shows that $H(R)$ is also simple as left and right ideal and generated by a non-zero central element, say $z$. Therefore, the left and right annihilators of $H(R)$ are equal to $\mathrm{ann}(H(R))=\mathrm{ann}(z)$.

Let $x\in R$ be a zero divisor. Then the left annihilator  of $x$ is non-zero and contains a non-zero ideal which contains $H(R)$; so $x\in \mathrm{ann}(H(R))$. As  any element of $\mathrm{ann}(H(R))$ is a zero divisor we conclude that  $\mathrm{ann}(H(R))$ is equal to the set of all zero divisors. Moreover, $R/\mathrm{ann}(H(R)) \simeq H(R)$ as left and right $R$-module, i.e. $\mathrm{ann}(H(R))$ is a left and right maximal ideal.
\end{proof}

\begin{proposition}\label{Paula}
Let $R$ be a subdirectly irreducible unit-central ring of stable range $1$. If all zero divisors are contained in $J(R)$, then $R$ is commutative.
\end{proposition}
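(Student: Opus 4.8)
The plan is to use the hypothesis to pin down $J(R)$ exactly, deduce that $R$ is local with a division ring as residue quotient, and then let the unit-central condition force every element to be central. The point is that once $R$ is local, every element is either a unit or lies in the radical, and in a unit-central ring both families sit inside the center.

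First I would invoke Lemma \ref{zerodivisors}. Since $R$ is a $\xi$-ring by the opening Lemma and is subdirectly irreducible by assumption, the set $M$ of all zero divisors is a two-sided ideal that is maximal both as a left and as a right ideal; note that $M$ is proper because $1$ is never a zero divisor. The assumption that every zero divisor lies in $J(R)$ reads $M\subseteq J(R)$. On the other hand $J(R)$, being the intersection of all maximal left ideals, is contained in the maximal left ideal $M$. Hence $M=J(R)$, so that $J(R)$ is simultaneously two-sided and maximal as a left ideal.

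The key step is then to read off that $R/J(R)$ is a division ring: a nonzero ring which is simple as a left module over itself has no proper nonzero left ideals and is therefore a division ring. Since an element is a unit exactly when its image modulo the Jacobson radical is a unit, this makes $R$ local with $U(R)=R\setminus J(R)$, so that every element of $R$ is either a unit or lies in $J(R)$. Finally I would combine this dichotomy with the unit-central hypothesis: the units are central, $U(R)\subseteq Z(R)$, while for $a\in J(R)$ the element $1+a$ is a unit, hence central, forcing $a\in Z(R)$ (this is the remark from the introduction that $J(R)\subseteq Z(R)$ in any unit-central ring). Therefore $R=U(R)\cup J(R)\subseteq Z(R)$, and $R$ is commutative.

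I expect the only real content to be the identification $M=J(R)$ together with the observation that a two-sided, left-maximal ideal produces a division-ring quotient; once $R$ is recognized as local the unit-central condition finishes things almost formally, and no appeal to the full structure theory of Proposition \ref{Martindale_primitive} seems necessary. The subtlety to verify carefully is exactly that $M$ is proper and genuinely maximal as a left ideal, so that the inclusion $J(R)\subseteq M$ is available and the equality $M=J(R)$ is legitimate.
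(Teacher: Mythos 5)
Your proposal is correct, and it diverges from the paper in an interesting way at the final step. The first half coincides with the paper's proof: both use Lemma \ref{zerodivisors} to see that the set $M$ of zero divisors is a two-sided ideal, maximal as a left and right ideal, and both conclude from $M\subseteq J(R)$ that $R$ is local (the paper states this slightly more tersely, but your explicit argument $J(R)\subseteq M\subseteq J(R)$, hence $M=J(R)$ and $R/J(R)$ a division ring, is exactly the content and is carefully justified --- including the properness of $M$, which is anyway built into the lemma's maximality statement). Where you part ways is the conclusion: the paper cites the fact that local rings are exchange rings and then invokes Theorem 2.3 of Khurana--Marks--Srivastava, which says that unit-central semi-exchange rings are commutative; you instead observe directly that in a local ring $R=U(R)\cup J(R)$, that $U(R)\subseteq Z(R)$ by hypothesis, and that $J(R)\subseteq Z(R)$ since $1+a\in U(R)\subseteq Z(R)$ for $a\in J(R)$, so $R=Z(R)$. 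Your finish is strictly more elementary and self-contained: it shows that \emph{any} local unit-central ring is commutative without any exchange-ring machinery, whereas the paper's version is shorter on the page only because it leans on an external theorem already in play elsewhere in the article. Both arguments are sound; yours has the pedagogical advantage of making the proposition independent of \cite{KhuranaMarksSrivastava}, and your closing remark is accurate --- neither proof needs Proposition \ref{Martindale_primitive} beyond its implicit use inside Lemma \ref{zerodivisors}.
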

\begin{proof}
As pointed out in Lemma \ref{zerodivisors}, the set of all zero divisors of $R$ is a two-sided ideal that is maximal as left and right ideal. Hence if all zero divisors are contained in $J(R)$, then $J(R)$ is the unique maximal left and right ideal of $R$ and $R$ is local. Since local rings are exchange rings, $R$ is commutative by Theorem \cite[Theorem 2.3]{KhuranaMarksSrivastava}.
\end{proof}

In general  zero divisors of a subdirectly irreducible unit-central ring of stable range $1$ do not need to be contained in $J(R)$. The example of a trivial extension $R=D\times M$, with multiplication given by $(d,m)(d',m')=(dd',dm'+md')$ for $d,d' \in D$ and $m,m'\in M$, where $D=\{\frac{a}{b}\in \QQ \mid 2,3 \nmid b\}$ and $M=\ZZ_{2^{\infty}} = E(\ZZ_2)$ (as given by Patrick Smith, see \cite{Lomp}) is a commutative semilocal subdirectly irreducible ring with exactly two maximal ideals $2D\times M$ and $3D\times M$, Jacobson radical $J(R)= 6D \times M$, set of zero divisors $2D \times M$ and essential minimal ideal $\{0\}\times \ZZ_2$.

Under some additional assumption all zero divisors of a subdirectly irreducible $\xi$-ring are central.

\begin{lemma}[Martindale]\label{zerodivisorscentral}
Let $R$ be a subdirectly irreducible ring such that for any $x\in R$ there exists a central element $c(x)$ such that $x^2c(x)-x$ is central. Then all zero divisors of $R$ are central.
\end{lemma}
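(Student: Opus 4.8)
The plan is to avoid wrestling directly with the (possibly complicated) ideal of zero divisors and instead reduce, via subdirect irreducibility, to two manageable cases. Since $x^2c(x)-x\in Z(R)$ for every $x$, the ring $R$ is a $\xi$-ring, so all of Proposition \ref{Martindale_primitive} and Lemmas \ref{leftidealscontainideal}, \ref{zerodivisors} are available; in particular the set $N$ of all zero divisors is a proper two-sided ideal, and left and right zero divisors coincide. By Proposition \ref{Martindale_primitive}(4), $R$ is a subdirect product of division rings and rings with central commutators. First I would invoke subdirect irreducibility: if $R\hookrightarrow\prod_i R_i$ is such a representation with $R_i=R/K_i$ and $\bigcap_i K_i=0$, then, since the heart of $R$ is contained in every non-zero ideal, not all $K_i$ can be non-zero (otherwise their intersection would contain the heart), so some $K_{i_0}=0$ and $R\cong R_{i_0}$. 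Hence $R$ is itself either a division ring or a ring whose commutators are central.

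If $R$ is a division ring then the only zero divisor is $0$, which is central, and there is nothing to prove. So the substance is the case $[R,R]\subseteq Z(R)$.

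For that case, fix a zero divisor $x\in N$ and an arbitrary $y\in R$, and set $u=[x,y]$, which is central by hypothesis. The key computation rewrites $x$ as $x=x^2c-w$ with $c=c(x)$ and $w=x^2c-x$ both central; then
$$u=[x,y]=[x^2c,y]=c\,[x^2,y]=c\,(xu+ux)=vu+uv,\qquad v:=cx .$$
Because $u$ is central it commutes with $v$, so $u=2vu$, i.e.\ $(1-2v)u=0$. Now $v=cx\in N$ since $N$ is a two-sided ideal, and $N$ is proper, so $1-2v\notin N$; thus $1-2v$ is a non-zero-divisor and $u=0$. As $y$ was arbitrary, $x\in Z(R)$, proving that every zero divisor is central.

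The main obstacle is really the reduction step rather than the final computation: once $R$ is known to have central commutators, the cancellation by the non-zero-divisor $1-2v$ is immediate. The one thing I would check carefully is circularity, since Proposition \ref{Martindale_primitive}(4) is Martindale's Main Theorem and this lemma may itself be one of the ingredients in its proof. If that is a concern I would instead argue directly inside the subdirectly irreducible $\xi$-ring, using Lemma \ref{zerodivisors} to write the heart as $H=Rz=zR$ with $z$ central and $N=\mathrm{ann}(z)$. When $z^2\neq 0$ the central element $z$ is a non-zero-divisor, so $N=\mathrm{ann}(z)=0$ and $R$ is a domain; the genuine difficulty is then the case $z^2=0$, where one must still show that commutators of zero divisors vanish, and there I expect to need a centralizer argument in the spirit of Proposition \ref{Martindale_primitive}(3) to force such commutators into the heart $H$, on which $N$ acts trivially from both sides.
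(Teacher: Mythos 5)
Your proof is correct, but it takes a genuinely different route from the paper's, which consists of a single citation: the statement is Martindale's Theorem~4, phrased in \cite{Martindale} as the centrality of $A(S)=\mathrm{ann}(H(R))$, combined with Lemma~\ref{zerodivisors}, which identifies $A(S)$ with the set of all zero divisors. You instead derive the lemma from the Main Theorem, Proposition~\ref{Martindale_primitive}(4). Your reduction is standard and sound: a subdirectly irreducible ring appearing in a subdirect product must coincide with one of the factors, since otherwise every kernel would contain the heart; the division ring case is trivial; and in the central-commutator case your computation holds up, because $c(x)$, $x^2c(x)-x$ and $u=[x,y]$ central yield $u=2c(x)xu$, while $x$ lies in the proper two-sided ideal $N$ of zero divisors (Lemma~\ref{zerodivisors}), so $1-2c(x)x\notin N$ is a non-zero-divisor and $u=0$. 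Amusingly, this is exactly the identity the paper derives later as equation~(\ref{equation1}) in the proof of Theorem~\ref{mainresult}(iii), where it is used in the opposite direction with the present lemma as an input (there one concludes $1-2c(x)x$ is a zero divisor, hence central); you run it with $x\in N$, so that cancellation rather than centrality is the conclusion. As for what each approach buys: the paper's citation rests on Martindale's direct analysis of $A(S)$, which does not presuppose central commutators, whereas your route is self-contained modulo Proposition~\ref{Martindale_primitive}(4) --- but that proposition is Martindale's Main Theorem, whose original proof proceeds via subdirectly irreducible $\xi$-rings with results of the flavour of Theorem~4 among the ingredients, so your own worry about circularity is well placed if one views this as a reconstruction of Martindale's mathematics; within this paper's deductive order, however, the proposition is stated before the lemma and quoted as a black box, so your derivation is legitimate here, and on that score it is no worse founded than the paper's citation. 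Finally, your fallback sketch is only half a proof: the case $z^2\neq 0$ is a nice shortcut (then $z\notin\mathrm{ann}(z)=N$ is a non-zero-divisor annihilating $N$, so $N=0$ and $R$ is a domain), but the case $z^2=0$ is left open, which would matter only if the primary argument were unavailable; since that argument is complete, the gap in the fallback is immaterial.
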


\begin{proof} This follows from \cite[Theorem 4]{Martindale} and Lemma \ref{zerodivisors}. Note that Martindale uses the notation $A(S)$ for $\mathrm{ann}(H(R))$.
\end{proof}

\section{Unit-central rings of stable range $1$}
We summarize the properties of unit-central rings of stable range $1$ in the following theorem. In particular, these are PI-rings with central commutators and commutative modulo their Jacobson radical.

\begin{theorem}\label{primitive}\label{modjac}
The following statements hold for a unit-central ring $R$ of stable range $1$.
\begin{enumerate}
\item Any factor ring of $R$ is unit-central of stable range $1$.
\item  $R/J(R)$ is commutative and $J(R)$ is contained in the center.
\item  $R$ is $2$-primal, i.e. the prime radical of $R$ contains all nilpotent elements.
\item $R$ is a PI-ring; all elements $x,y,z \in R$ satisfy $[x,[y,z]]=0$ and $[x,y]^2=0$.
\item $R$ is left and right quasi-duo, i.e. any maximal one-sided ideal is two-sided.
\item  Any prime factor $R/P$ is a commutative integral domain and in particular any simple factor $R/M$ by a maximal ideal $M$ is a field and hence $M$ is a maximal left and right ideal of $R$.
\end{enumerate}
\end{theorem}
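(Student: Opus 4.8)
The plan is to reduce each assertion to a statement about prime or primitive factors, where Proposition~\ref{Martindale_primitive} applies, and to record first one auxiliary fact that I will use repeatedly: \emph{a prime ring in which every commutator is central is commutative}. Indeed, for $d=[x,y]\in Z(R)$ one has $[x,xy]=x[x,y]=xd$, which is again a commutator and hence central; centrality of $xd$ forces $[r,x]\,d=0$ for every $r$, and since a nonzero central element of a prime ring is regular, this yields $d=0$ or $x\in Z(R)$, in either case $d=0$. Throughout I use that $R$ is a $\xi$-ring by the first Lemma, together with Proposition~\ref{Martindale_primitive}.

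For (1), stable range $1$ is inherited by homomorphic images (a standard fact, cf.\ Vaserstein), so it suffices to see that units of a factor $R/I$ are central, and for this I would lift them: if $\bar u\in U(R/I)$, choose $w$ with $wu=1-b$, $b\in I$; then $wu\in Ru$ gives $Ru+Rb=R$, and stable range $1$ produces $t$ with $v:=u+tb\in U(R)$ and $v\equiv u\pmod I$, so $\bar u=\bar v\in Z(R/I)$ because $v\in U(R)\subseteq Z(R)$. For (2), if $x\in J(R)$ then $1+x\in U(R)\subseteq Z(R)$, whence $x\in Z(R)$ and $J(R)\subseteq Z(R)$; and $R/J(R)$, being semiprimitive, is a subdirect product of primitive factors, each of which is by (1) a unit-central $\xi$-ring and hence, being primitive, a division ring by Proposition~\ref{Martindale_primitive}(3), thus a field. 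A subdirect product of fields is commutative.

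Assertion (3) follows because every nilpotent $n$ is central by Proposition~\ref{Martindale_primitive}(2), so $Rn=nR$ is a nilpotent two-sided ideal, hence lies in the prime radical; thus all nilpotents lie in the prime radical, giving $2$-primality (the reverse inclusion being automatic). For (6), a prime factor $R/P$ is unit-central of stable range $1$ by (1), so (2) applies and its commutators lie in $J(R/P)\subseteq Z(R/P)$; the auxiliary fact then makes $R/P$ commutative, hence a domain, and when $M$ is maximal $R/M$ is a field, so $M$ is maximal as a left and as a right ideal. For (5), given a maximal left ideal $M$, its core $P=\mathrm{ann}_R(R/M)$ is primitive, so $R/P$ is a field as above; a field has no proper nonzero left ideals, forcing $M=P$ to be two-sided, and symmetrically on the right.

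Finally (4): $[x,[y,z]]=0$ is immediate from (2), since $[y,z]\in J(R)\subseteq Z(R)$. For $[x,y]^2=0$, put $a=[x,y]$; by (6) $a$ lies in every prime ideal, hence is nilpotent, and as $a$ is central the element $ax$ is nilpotent and therefore central by Proposition~\ref{Martindale_primitive}(2); expanding $[ax,y]=0$ then gives $a(xy-yx)=a^2=0$. Since $[x_1,x_2]^2$ is a polynomial identity, $R$ is a PI-ring. I expect the step $[x,y]^2=0$ to be the main obstacle: centrality of commutators alone does not force nilpotency (Heisenberg-type algebras have central but non-nilpotent commutators), so the unit-central and stable-range-$1$ hypotheses must be used twice—first through commutativity of all prime factors to make $a$ nilpotent, then through the centrality of nilpotents in a $\xi$-ring to pass from nilpotency to $a^2=0$.
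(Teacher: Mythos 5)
Your proof is correct, and on items (1)--(3) it is essentially the paper's own argument: you lift units along $Ru+Rb=R$ exactly as the paper does along $Rx+I=R$, you obtain commutativity of $R/J(R)$ from primitive factors being division rings via Proposition~\ref{Martindale_primitive}(3) (fields, since unit-central), and your observation that a central nilpotent generates a nilpotent two-sided ideal is the ``easy and well-known'' fact the paper invokes for $2$-primality. Where you genuinely diverge is in (4)--(6), whose logical order you invert. The paper proves (4) first by a two-line trick: since $[x,y]\in J(R)$ and $J(R)\subseteq Z(R)$, the element $x[x,y]$ also lies in $J(R)$ and is therefore central, and expanding $[x[x,y],y]=0$ yields $[x,y]^2=0$ directly; (6) is then a corollary, since a central nilpotent lies in every prime ideal. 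You instead prove (6) first, via your auxiliary lemma that a prime ring with central commutators is commutative --- a correct Herstein-type fact, and your proof of it is sound: $xd=[x,xy]$ is central, hence $[r,x]\,d=0$ for all $r$, and a nonzero central element of a prime ring is regular, forcing $d=0$. Only then do you deduce $[x,y]^2=0$, by routing $a=[x,y]$ through the prime radical (hence nilpotent), making $ax$ a central nilpotent via Proposition~\ref{Martindale_primitive}(2), and expanding $[ax,y]=0$. Both arguments are valid; the paper's $J(R)$-trick is shorter and needs nothing beyond (2), while your route costs an extra lemma and a detour but makes (6) logically independent of the identity $[x,y]^2=0$, and the auxiliary fact you isolate is of independent interest. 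Your (5) also differs mildly: the paper simply notes that $M/J(R)$ is an ideal of the commutative ring $R/J(R)$, whereas you pass to the core $P$ of $M$ and use that the primitive factor $R/P$ is a field to force $M=P$; both work, the paper's being slightly more economical. Finally, your closing diagnosis is accurate: centrality of commutators alone cannot give nilpotency (the enveloping algebra of the Heisenberg Lie algebra is the standard witness), and the paper's proof injects the extra hypotheses at exactly the point you identify, through $x[x,y]\in J(R)\subseteq Z(R)$.
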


\begin{proof}
(1) Let $I$ be a two-sided ideal of $R$ and set $\overline{R}=R/I$. For any unit $x+I \in U(\overline{R})$ there exists $y\in R$ with $yx-1\in I$. Thus $Rx + I = R$ and by the stable range $1$ condition, there exist $t\in I$ and $u\in U(R)$ with $x+t=u$. Hence $x+I = u+I$. Since $R$ is unit-central, $u$ is central and hence $x+I=u+I$ is central in $R/I$. Therefore $\overline{R}$ is unit-central. 
By \cite{Vaserstein}, $\overline{R}$  has stable range $1$.

(2) By (1), any factor ring of $R$ is unit-central of stable range $1$. Hence any (left or right) primitive factor ring is a  division ring by Proposition \ref{Martindale_primitive}(3)  and since units are central it must be a field. Since $R/J(R)$ is contained in the product of its primitive factors which are commutative, also $R/J(R)$ is commutative.

(3)  It is easy and well-known that rings having all nilpotent elements central are 2-primal. 

(4) Using (2) we see that $[x,y]\in J(R)$ is central, for any $x,y$ in $R$. In particular, $R$ satisfies the identity $[[x,y],z]=0$, for all $x,y,z\in R$. Since $x[x,y]\in J(R)$ and therefore is also central we easily  get 
$[x,y]^2=0$. This gives (4).

(5)  If $M$ is any maximal left (resp. right) ideal of $R$. Then $J(R)\subseteq M$ and $M/J(R)$ is a two-sided ideal in $R/J(R)$ as $R/J(R)$ is commutative by (2). Thus,  $M$ is a two-sided ideal of $R$.

(6) By (4), any commutator $[x,y]$ is a nilpotent central element, so belongs to every prime ideal $P$ of $R$. Thus $R/P$ is commutative. In particular, for any maximal  ideal $M$, $R/M$ is commutative and hence a field.

\end{proof}

\section{Main Results}
Our main result shows that a unit-central ring of stable range $1$ is commutative under the additional assumption that it is semiprime or Noetherian or has unit $1$-stable range. Recall that a ring has {\it unit $1$-stable range} if for any $a,b \in R$ with $aR+bR=R$ there exists a unit $u\in R$, such that $a+ub$ is a unit (see \cite{GoodearlMenal}). 

\begin{theorem}\label{mainresult} \label{KrullZero} 
A unit-central ring $R$ of stable range $1$ is commutative under any of the following additional assumptions:
\begin{enumerate}
\item[(i)] $R$ is semiprime;
\item[(ii)] $R$ is left or right Noetherian;
\item[(iii)] $R$ has unit $1$-stable range;
\item[(iv)] Any prime ideal of $R$ is maximal.
\end{enumerate}
\end{theorem}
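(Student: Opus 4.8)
The plan is to establish each of the four cases by reducing to the commutative criterion already available, namely that a unit-central exchange ring is commutative (\cite[Theorem 2.3]{KhuranaMarksSrivastava}) together with the subdirect-product and local-ring machinery developed in Section~2. By Theorem~\ref{modjac}(6) every prime factor $R/P$ is a commutative integral domain, so the commutators $[x,y]$ lie in the prime radical; the entire task is to show this prime radical is zero, equivalently that $R$ is semiprime. Thus cases (ii), (iii), (iv) should each be argued by showing the hypothesis forces semiprimeness (or directly forces commutativity), and then case (i) does the rest.

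For case (i), I would argue that a semiprime unit-central ring of stable range $1$ is a subdirect product of its prime factors $R/P$, each of which is a commutative domain by Theorem~\ref{modjac}(6); since a subdirect product of commutative rings is commutative, $R$ is commutative. For case (iii), I expect unit $1$-stable range to be exactly the hypothesis under which one can run a Goodearl--Menal style argument (\cite{GoodearlMenal}) showing the Jacobson radical splits off or that zero divisors are confined to $J(R)$; invoking Proposition~\ref{Paula} on each subdirectly irreducible factor should then give commutativity. For case (iv), when every prime ideal is maximal, each prime factor $R/P$ is both a commutative domain and simple (being a factor by a maximal ideal that is two-sided by Theorem~\ref{modjac}(6)), hence a field; this is a classical Krull dimension $0$ condition that should make $R/J(R)=R$ modulo the nil prime radical behave like a von Neumann regular or strongly $\pi$-regular ring, placing it in the exchange class where \cite[Theorem 2.3]{KhuranaMarksSrivastava} applies.

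The genuinely hard case is (ii), the Noetherian one, because being Noetherian does not immediately kill the prime radical. The approach I would take is to pass to $R/N$ where $N$ is the prime radical (all nilpotents, by Theorem~\ref{modjac}(3)), reducing to the semiprime Noetherian case handled by (i), and then to climb back up by showing that the nil ideal $N$ cannot obstruct commutativity. Here the key leverage is that in a Noetherian ring the prime radical is nilpotent, and by Theorem~\ref{modjac}(2) the factor $R/J(R)$ is commutative with $J(R)$ central; combining the nilpotency of $N$ with the centrality of $J(R)\supseteq N$ should force every commutator $[x,y]$, already shown to satisfy $[x,y]^2=0$ and $[x,[y,z]]=0$, to vanish. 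The main obstacle I anticipate is controlling the interaction between the nilpotent prime radical and the stable range condition on subdirectly irreducible factors: one must verify that the localization or subdirect components inherit the Noetherian hypothesis strongly enough that Lemma~\ref{zerodivisorscentral} and Proposition~\ref{Paula} apply to each factor, and that centrality of zero divisors lifts from factors back to $R$ itself.
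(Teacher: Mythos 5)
Your cases (i) and (iv) are essentially sound and close to the paper's own arguments: for (i), your subdirect-product phrasing is equivalent to the paper's one-line observation that every commutator is a central nilpotent element (Theorem \ref{modjac}) and hence vanishes in a semiprime ring; for (iv), the paper makes precise what you gesture at, namely $R/N(R)$ is commutative reduced with all primes maximal, hence von Neumann regular by \cite[Theorem 3.71]{Lam_Lectures} and exchange by \cite[Proposition 1.6]{Nicholson} --- but note you leave implicit the one step that actually lets you descend to $R$ itself, namely that idempotents lift modulo the nil ideal $N(R)$ (\cite[Theorem 21.28]{Lam}), making $R$ semi-exchange so that \cite[Theorem 2.3]{KhuranaMarksSrivastava} applies. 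The genuine gap is in (ii). Your framing that ``the entire task is to show the prime radical is zero'' cannot be carried out: $K[x]/(x^2)$ is a commutative (hence unit-central) Noetherian ring of stable range $1$ that is not semiprime, so none of the hypotheses (ii)--(iv) forces semiprimeness; the theorem must be proved in the presence of a nonzero nil radical. Your fallback --- that nilpotency of $N$ together with centrality of $J(R)$ ``should force'' $[x,y]=0$ --- names no mechanism: the identities $[[x,y],z]=0$ and $[x,y]^2=0$ with $N$ nilpotent are perfectly consistent with noncommutativity (take $K\langle x,y\rangle$ modulo the relations making $[x,y]$ central of square zero); whether such a ring can in addition be unit-central of stable range $1$ is precisely the open question, so no formal manipulation of these identities can close the case. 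Moreover, the inheritance issue you flag as the main obstacle is trivial (factors of Noetherian rings are Noetherian, and Theorem \ref{modjac}(1) handles the rest), while the idea that actually carries the proof is absent from your sketch: on a subdirectly irreducible factor, for a zero divisor $x$ apply Fitting's Lemma to $\lambda_x$ to get $n$ with $\mathrm{Im}(\lambda_x^n)\cap\mathrm{Ker}(\lambda_x^n)=0$; since every nonzero one-sided ideal contains the heart $H(R)$ (Lemma \ref{leftidealscontainideal}) and $\mathrm{Ker}(\lambda_x^n)\neq 0$, this forces $x^nR=0$, so zero divisors are nilpotent, hence central and contained in $J(R)$, and Proposition \ref{Paula} concludes.

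Case (iii) has a gap of the same kind. Your plan --- show zero divisors are confined to $J(R)$ and invoke Proposition \ref{Paula} on subdirectly irreducible factors --- runs against the paper's own warning: the example following Proposition \ref{Paula} (Smith's trivial extension) is a subdirectly irreducible unit-central ring of stable range $1$ whose zero divisors are \emph{not} contained in $J(R)$, and you offer no argument that unit $1$-stable range restores such confinement. The actual leverage of unit $1$-stable range is different and nowhere in your sketch: applying it to $(1-x)R + x^2R = R$ produces $c(x)$ which is a \emph{unit}, hence central, so that $x^2c(x)-x$ is central with $c(x)\in Z(R)$; this is exactly the hypothesis of Martindale's Lemma \ref{zerodivisorscentral}, which makes all zero divisors of a subdirectly irreducible non-semiprime factor central. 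From there the paper argues by explicit computation: since $[x,y]$ is central of square zero, $2x[x,y]=[x^2,y]$ and $[x,y]=c(x)[x^2,y]$, giving $(1-2c(x)x)[x,y]=0$; if $[x,y]\neq 0$ then $1-2c(x)x$ is a zero divisor, hence central, so $2x$ is central; a \emph{second} application of unit $1$-stable range to the unimodular pair $(1-2x, x^2)$ then makes $x^2$ central, whence $x$ itself is central. There is no ``Goodearl--Menal style'' splitting of the Jacobson radical to appeal to here; the Goodearl--Menal paper enters this circle of ideas only later, as a criterion guaranteeing unit $1$-stable range for certain algebras over uncountable fields.
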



\begin{proof} 

(i) By Theorem \ref{modjac}, for any $x,y$ in $R$, the commutator $[x,y]$ is a central nilpotent element of $R$, hence equal to $0$ as $R$ is semiprime. 

(ii) Let $R$ be a subdirectly irreducible unit-central ring of stable range $1$. By Proposition \ref{Paula}, it is enough to show that  zero divisors of $R$ are contained in $J(R)$ to conclude that $R$ is commutative. Note that by Lemma \ref{leftidealscontainideal}, any left zero divisor of $R$ is also a right zero divisor and vice-versa. Suppose that $R$ is right Noetherian. Then for any zero divisor $x\in R$ consider the left multiplication $\lambda_x:R\rightarrow R$ given by $\lambda_x(y)=xy$, for any $y\in R$. Since $x$ is a zero divisor, then $\mathrm{Ker}(\lambda_x^n) \neq 0$ for any $n>0$. By Fitting's Lemma, there exists $n>0$, such that $\mathrm{Im}(\lambda_x^n)\cap \mathrm{Ker}(\lambda_x^n) = 0$. 
As seen in the proof of Lemma \ref{zerodivisors},  $H(R)$, the intersection of all non-zero ideals of $R$, is an essential and minimal left and right ideal of $R$. Hence $\mathrm{Ker}(\lambda_x^n)$ contains $H(R)$, as it is non-zero, and we conclude $x^nR = \mathrm{Im}(\lambda_x^n) = 0$. Thus $x$ is nilpotent and contained in $J(R)$. By Proposition \ref{Paula}, $R$ is commutative. 

In general, if $R$ is right Noetherian, unit-central ring of stable  range $1$, then any subdirectly irreducible factor is also a right Noetherian, unit-central ring  of stable range $1$ by Theorem \ref{modjac}(1) and thus commutative. As $R$ is a subdirect product of subdirectly irreducible factors, $R$ is commutative.

(iii) Let $R$ be an arbitrary unit-central ring of unit $1$-stable range. Then any subdirectly irreducible factor is also unit-central by Theorem \ref{modjac}(1) and also of unit $1$-stable range. If such a factor is semiprime, then by (i) it is also commutative. Hence assume $R$ to be a subdirectly irreducible non-semiprime unit-central ring of unit $1$-stable range. Then for any $x\in R$, $(1-x)R+x^2R=R$. Hence, there exists a unit $c(x)$ with $1-x + x^2c(x)$ being a unit. As units are central, $x^2c(x)-x$ and $c(x)$ are central, and by Lemma \ref{zerodivisorscentral} all zero divisors are central.
Let $x\in R$ be any element. Suppose $x^2=0$, then $x$ is central. Suppose $x^2\neq 0$, then there exist a unit $c(x)$ such that $x^2c(x)-x$ is central. For any $y\in R$, we have  $[x,y]$ is central and square-zero, by Theorem \ref{modjac}. Thus
$$2x[x,y] = x[x,y] + [x,y]x = x^2y-yx^2 = [x^2,y].$$
Since $c(x)$ and $x^2c(x)-x$ are central, we also have $[x,y] = c(x)[x^2,y]$.
Hence $[x,y]= 2c(x)x[x,y]$, i.e. 
\begin{equation}\label{equation1}(1-2c(x)x)[x,y] = 0.\end{equation}
Thus $1-2c(x)x$ is a zero divisor and therefore central. Hence $2c(x)x$ is central. Since $c(x)$ is a unit, $2x$ is central. Now we apply again the unit $1$-stable range condition to $(1-2x)R+x^2R=R$ (as $(1-2x)(1+2x)+4x^2=1$ ) to find  $c' \in U(R)$ with $1-2x + x^2c' = u$, for some unit $u$. In particular $x^2c' = u-1+2x$ is central and hence $x^2\in Z(R)$. Using additionally that $x-c(x)x^2$ and $c(x)$ are central we conclude $x$ is central. 

Thus all subdirectly irreducible factors of a unit-central ring $R$ of unit $1$-stable range are commutative and so is $R$ itself.

(iv) By Theorem \ref{modjac}, $R/N(R)$ is commutative and by hypothesis all prime ideals of $R/N(R)$ are maximal. By \cite[Theorem 3.71]{Lam_Lectures}, $R/N(R)$ is von Neumann regular and by \cite[Proposition 1.6]{Nicholson}, $R/N(R)$ is an exchange ring.  Since idempotents lift modulo nil ideals  by \cite[Theorem 21.28]{Lam},  $R$ is a semi-exchange ring. By \cite[Theorem 2.3]{KhuranaMarksSrivastava}, $R$ is commutative.
\end{proof}

\begin{remark}
Alternatively one could have proven Theorem \ref{mainresult}(ii) using  \cite[Theorem 2.1]{Deshpande}, which states that a right Noetherian ring $R$ that contains an essential minimal right ideal such any left zero divisor is also a right zero divisor is local. Together with \cite[Theorem 2.3]{KhuranaMarksSrivastava} this shows the commutativity of $R$.
\end{remark}

Note that for the proof of Theorem \ref{mainresult}(iii) one only needs that for any $a,b\in R$ with $aR+bR=R$ there exists a central element $u$  with with $a+ub$ being central and $u$ not a zero divisor.

\section*{Algebras over fields}

Goodearl and Menal gave various criteria for a ring to have unit $1$-stable range. One of them says that an algebra $R$ over an uncountable field  with only countably many right or left primitive ideals, and all right or left primitive factor rings of $R$  artinian has unit $1$-stable range (see \cite[Theorem 2.4]{GoodearlMenal}).

\begin{proposition}
Let $R$ be an algebra over an uncountable field with only countably many left or right primitive ideals. If $R$ is unit-central of stable range $1$, then $R$ is commutative.
\end{proposition}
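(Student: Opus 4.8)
The plan is to reduce the statement to Theorem \ref{mainresult}(iii) by checking that $R$ satisfies the Goodearl--Menal criterion recalled just above, i.e.\ that $R$ has unit $1$-stable range. Two of the three hypotheses of \cite[Theorem 2.4]{GoodearlMenal} are precisely our assumptions: $R$ is an algebra over an uncountable field, and $R$ has only countably many one-sided primitive ideals. The only remaining point is therefore to verify that every (left or right) primitive factor ring of $R$ is artinian.

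To do this I would take a primitive ideal $P$ of $R$. By Theorem \ref{modjac}(1) the factor $R/P$ is again unit-central of stable range $1$, hence a $\xi$-ring by the opening Lemma of Section 2; being left or right primitive, it is a division ring by Proposition \ref{Martindale_primitive}(3), and a unit-central division ring is a field. Thus $R/P$ is a field, in particular artinian. (Alternatively, since primitive ideals are prime, Theorem \ref{modjac}(6) already gives that $R/P$ is a commutative integral domain, which being primitive must be a field.) Hence all one-sided primitive factor rings of $R$ are artinian, and \cite[Theorem 2.4]{GoodearlMenal} applies to give that $R$ has unit $1$-stable range. Finally, Theorem \ref{mainresult}(iii) applies to $R$, which is now known to be unit-central of stable range $1$ and of unit $1$-stable range, and we conclude that $R$ is commutative.

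I expect no serious obstacle here: the entire content is the observation that the primitive factors are forced to be fields, an immediate consequence of the structure theory of Section 2 together with Theorem \ref{modjac}. The one point that needs a little care is the left/right bookkeeping, since both the hypothesis on primitive ideals and \cite[Theorem 2.4]{GoodearlMenal} are stated in a one-sided (``left or right'') form; but Proposition \ref{Martindale_primitive}(3) treats left- and right-primitive $\xi$-rings alike, so whichever side is assumed suffices, and the factors on that side are fields as required.
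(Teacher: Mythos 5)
Your proposal is correct, and its skeleton is the same as the paper's: verify the hypotheses of the Goodearl--Menal criterion \cite[Theorem 2.4]{GoodearlMenal}, conclude that $R$ has unit $1$-stable range, and finish with Theorem \ref{mainresult}(iii). The one point of divergence is how the Artinian condition on primitive factor rings is established. The paper observes that $R$ is a PI-ring (Theorem \ref{modjac}(4)) and deduces from this that every primitive factor is Artinian --- which implicitly rests on Kaplansky's theorem that a primitive PI-ring is simple Artinian. You instead argue directly: each primitive factor $R/P$ is again unit-central of stable range $1$ by Theorem \ref{modjac}(1), hence a $\xi$-ring, hence a division ring by Proposition \ref{Martindale_primitive}(3), and unit-centrality forces it to be a field. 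Your route is arguably cleaner, since it bypasses Kaplansky's theorem and gives the stronger conclusion that primitive factors are fields; in fact it is exactly the argument already embedded in the paper's proof of Theorem \ref{modjac}(2), so nothing new needs to be checked. Your remark on left/right bookkeeping is also sound, since Proposition \ref{Martindale_primitive}(3) covers both sides. Either justification is adequate; the paper's PI-route is shorter to state but leans on a deeper external theorem, while yours is self-contained within the tools the paper has already assembled.
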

\begin{proof}
 As a unit-central ring of stable range $1$ is a PI-ring by Theorem \ref{modjac}(5), $R$ is a PI-ring and any primitive factor ring of $R$ is Artinian. By Goodearl-Menal's result \cite[Theorem 2.4]{GoodearlMenal} , $R$ has unit $1$-stable range and by Theorem \ref{mainresult}(iii) is commutative.
\end{proof}

Recall that the classical Krull dimension $\mathcal{K}(R)$ of $R$ is defined to be the supremum of the gaps in chains of 
prime ideals. In particular $\mathcal{K}(R)=0$ if and only if all prime ideals of $R$ are maximal. For a commutative 
Noetherian domain $D$ a theorem of Bass says that the stable range $sr(D)$ is bounded from above by the (classical) 
Krull dimension plus one, i.e. $sr(D) \leq \mathcal{K}(D) +1$ (see \cite{Bass}). Suslin showed in  \cite[Theorem 
11]{Suslin} (see also \cite[Theorem 11.5.8]{McConnellRobson}) that if $D$ is an affine commutative domain $D$ over some 
field $K$ and if $K$ has infinite transcendental degree over its prime subfield, e.g. $K=\mathbb{R}$, then $sr(D) = 
\mathcal{K}(D) +1$. Hence if $sr(D)=1$, then $\mathcal{K}(D)=0$ and as $D$ is a domain,  $D$ must be a field. Therefore 
we can conclude:

\begin{corollary} Let $R$ be a unit-central ring of stable range $1$. If $R$ is a finitely generated algebra over some field $K$, such that $K$  has infinite transcendental degree over its prime subfield, then $R$ is commutative.
\end{corollary}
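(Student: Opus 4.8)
The plan is to reduce everything to the prime factor rings of $R$ and then feed those into the Suslin--Bass bound on stable range recalled just above, so that the hypothesis of Theorem \ref{mainresult}(iv) becomes available. First I would fix an arbitrary prime ideal $P$ of $R$ and study the factor ring $R/P$. By Theorem \ref{modjac}(6), $R/P$ is a commutative integral domain, and by Theorem \ref{modjac}(1) it again has stable range $1$. Because $R$ is a $K$-algebra and $P$ is a proper ideal, the structure map $K\to R/P$ is a homomorphism of unital rings out of a field and is therefore injective; hence $R/P$ is an affine commutative domain over $K$, and $K$ still has infinite transcendental degree over its prime subfield. The finite generation of $R/P$ as a $K$-algebra is immediate, since a finite generating set of $R$ maps onto a generating set of $R/P$.

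Next I would apply Suslin's theorem precisely as in the discussion preceding the corollary: for such a domain one has $sr(R/P)=\mathcal{K}(R/P)+1$. Since $sr(R/P)=1$, this forces $\mathcal{K}(R/P)=0$, and a commutative integral domain of classical Krull dimension $0$ is a field. Consequently $P$ is a maximal ideal of $R$. As $P$ was an arbitrary prime, every prime ideal of $R$ is maximal.

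Finally, with the hypothesis of Theorem \ref{mainresult}(iv) now verified, I would conclude directly that $R$ is commutative.

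I expect the only delicate point to be the verification that the hypotheses of Suslin's theorem genuinely transfer to $R/P$, namely that $R/P$ really is a \emph{commutative domain} finitely generated over $K$. The commutativity and domain property are exactly what Theorem \ref{modjac}(6) supplies — and this is the crucial input, since a priori a prime factor of a noncommutative ring need not be commutative, let alone a domain. Once those two facts are in place, the proof is a short chain of already-established results and presents no further obstacle.
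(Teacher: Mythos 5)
Your proposal is correct and follows essentially the same route the paper intends: reduce to prime factors, which are commutative affine domains of stable range $1$ by Theorem \ref{modjac}(1) and (6), invoke Suslin's equality $sr(D)=\mathcal{K}(D)+1$ to conclude each $R/P$ is a field, and finish with Theorem \ref{mainresult}(iv). The paper leaves this reduction implicit here (spelling it out only in the analogous final theorem for characteristic $0$ or algebraically closed $K$), so your write-up just makes the intended argument explicit, including the correct observation that Theorem \ref{modjac}(6) is what licenses applying Suslin's commutative result to a priori noncommutative prime factors.
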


At the end of \cite{EstesOhm}, Estes and Ohm claimed that if a commutative domain $D$ is a finitely generated extension of a field $K$ such that the transcendence degree of $D$ over $K$ is non-zero, then $1$ is not in the stable range of $D$. They claimed that this would follow from their Proposition 7.6 and Noether Normalization. The mentioned result \cite[Proposition 7.6]{EstesOhm} of Estes and Ohm is:

\begin{proposition}[Estes-Ohm]\label{estesohmprop}
Let $D_0$ be an integrally closed domain with quotient field $F_0$, and let $D$ be the integral closure of $D_0$ in a finite separable extension $F$ of $F_0$. Then there exists an integer $n$ such that if $(a, b)$ is a unimodular sequence of $D_0$ which is stable in $D$, then $(a^n, b)$ is stable in $D_0$.
\end{proposition}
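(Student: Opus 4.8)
The plan is to prove the proposition via the norm map $N = N_{F/F_0}\colon F \to F_0$ of the finite separable extension, taking $n = [F:F_0]$. The three facts I would rely on are standard: $N$ is multiplicative; the norm of any element of $D$ (which is integral over $D_0$) lies in $D_0$, because it is a symmetric function of conjugates that are integral over $D_0$ and it lies in $F_0$, so integral closedness of $D_0$ forces it into $D_0$; and consequently $N$ carries units of $D$ to units of $D_0$, since $N(u)N(u^{-1}) = N(1) = 1$ with both factors in $D_0$.

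Now suppose $(a,b)$ is unimodular in $D_0$ and stable in $D$, so that $u := a + cb$ is a unit of $D$ for some $c \in D$. Writing $\sigma_1, \dots, \sigma_d$ for the $d = [F:F_0]$ distinct $F_0$-embeddings of $F$ (separability guarantees there are exactly $d$ of them), and using that $a,b\in F_0$ are fixed by each $\sigma_i$, I would compute
$$N(u) = \prod_{i=1}^d \sigma_i(u) = \prod_{i=1}^d \bigl(a + \sigma_i(c)\, b\bigr).$$
Expanding this product yields a form homogeneous of degree $d$ in $a$ and $b$,
$$N(u) = \sum_{j=0}^d \eta_j\, a^{d-j} b^j, \qquad \eta_j = e_j\bigl(\sigma_1(c),\dots,\sigma_d(c)\bigr),$$
where $e_j$ denotes the $j$-th elementary symmetric polynomial. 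The crucial observation is that $\eta_0 = 1$, while each remaining $\eta_j$ is a symmetric function of the conjugates of the integral element $c$, hence (up to sign) a coefficient of the characteristic polynomial of multiplication by $c$ on $F$; thus $\eta_j \in F_0$ and is integral over $D_0$, so $\eta_j \in D_0$.

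Separating the $j=0$ term then gives $N(u) = a^d + b\,w$ with $w = \sum_{j=1}^d \eta_j\, a^{d-j} b^{\,j-1} \in D_0$. Because $u$ is a unit in $D$, its norm $N(u)$ is a unit in $D_0$, and therefore $a^d + w b \in U(D_0)$. Exhibiting such a $w$ is precisely the assertion that $(a^d, b)$ is stable in $D_0$ (note that a unit $a^d + wb$ automatically makes $(a^d,b)$ unimodular, so no separate check is needed), which proves the proposition with $n = [F:F_0]$.

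The main technical point, and the only place where integral closedness of $D_0$ is genuinely used, is the claim that the coefficients $\eta_j$ land in $D_0$ rather than merely in $F_0$; equivalently, that the characteristic polynomial of an element integral over the integrally closed domain $D_0$ has its coefficients in $D_0$. A clean way to package the whole computation, sidestepping any bookkeeping of conjugates, is to observe that $N(u) = \det(aI + b\,M_c)$, where $M_c$ is the $F_0$-linear ``multiplication by $c$'' operator on $F$; the determinant is visibly of the form $a^d + b\,(\cdots)$, whose trailing coefficients are the signed coefficients of the characteristic polynomial of $M_c$ and hence lie in $D_0$ by integral closedness.
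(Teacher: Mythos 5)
Your proof is correct. Note that the paper itself gives no proof of this proposition: it is quoted verbatim from Estes and Ohm (their Proposition 7.6), so the only meaningful comparison is with the original source, and your norm argument with $n=[F:F_0]$ is essentially the classical one. All the key steps check out: since $a,b\in F_0$ are fixed by the embeddings, $N_{F/F_0}(a+cb)=\prod_{i=1}^d\bigl(a+\sigma_i(c)b\bigr)$ is a form $a^d+wb$ whose coefficients $e_j(\sigma_1(c),\dots,\sigma_d(c))$ lie in $F_0$ and are integral over $D_0$ (each $\sigma_i(c)$ satisfies the same monic polynomial as $c$), hence lie in $D_0$ by integral closedness; and $u\in U(D)$ forces $u^{-1}\in D$, so $N(u)N(u^{-1})=1$ with both factors in $D_0$, making $a^d+wb$ a unit of $D_0$ and $(a^d,b)$ stable there. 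Your identification of the multiset $\{\sigma_i(c)\}$ with the root multiset of the characteristic polynomial of $M_c$ (each distinct conjugate repeated $[F:F_0(c)]$ times) is exactly where separability enters the embeddings version of the computation, and the uniformity of $n=d$ across all pairs $(a,b)$ is as required. Your closing reformulation $N(u)=\det(aI+bM_c)$ is a genuine bonus: the coefficients of the characteristic polynomial of an element integral over the integrally closed $D_0$ lie in $D_0$ with no separability assumption, so that packaging actually proves the statement without the separability hypothesis --- a point worth noting given that the paper's subsequent discussion (before Corollary \ref{consequence_EstesOhm}) worries precisely about whether separability can be guaranteed when applying this proposition after Noether normalization.
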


Noether Normalization (see \cite[Theorem 13.3]{Eisenbud}) says that given an affine commutative domain $D$ over a field $K$, there exists a number $m\geq 0$ (which is equal to the transcendental degree of $D$ over $K$) and a subring $D_0=K[x_1,\ldots, x_m]$ of $D$ which is a polynomial ring, such that $D$ is a finitely generated $D_0$-module. By \cite[Corollary 4.5]{Eisenbud}, $D$ is integral over $D_0$.

It is not clear to us, whether the fraction field of $D$ is always separable over the fraction field of $D_0$ in case $D$ has stable range $1$. Hence we do not know whether Noether Normalization can always be applied to Proposition \ref{estesohmprop} as claimed by Estes and Ohm. However in characteristic zero or for an algebraically closed base field (see \cite[Proposition 1.1.33]{Hulek}) we conclude:

\begin{corollary}\label{consequence_EstesOhm} Let $D$ be an affine commutative domain over a field of $K$ of characteristic zero or algebraically closed. If $D$ has stable range $1$, then $D$ is a field.
\end{corollary}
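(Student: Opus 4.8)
The plan is to show that the transcendence degree $m$ of $D$ over $K$ must vanish; since $D$ is affine, Noether Normalization then exhibits $D$ as a finite-dimensional $K$-algebra which, being a domain, is a field. So I would assume for contradiction that $m\geq 1$ together with $sr(D)=1$, and aim to contradict the Estes--Ohm Proposition~\ref{estesohmprop}.

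First I would fix a Noether normalization $D_0=K[x_1,\ldots,x_m]\subseteq D$ with $D$ module-finite over $D_0$, chosen so that $F:=\mathrm{Frac}(D)$ is \emph{separable} over $F_0:=\mathrm{Frac}(D_0)=K(x_1,\ldots,x_m)$; this is where the hypothesis on $K$ enters (see the obstacle below). Let $\wt{D}$ be the integral closure of $D_0$ in $F$. Since every element of $D$ is integral over $D_0$ and lies in $F$, we have $D\subseteq\wt{D}$, so Proposition~\ref{estesohmprop} applies to $D_0\subseteq\wt{D}$ and produces an integer $n\geq 1$ with the stated property.

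Next I would feed a carefully chosen unimodular pair into this machine. Take $a=x_1+1$ and $b=x_1^{\,n+1}$ in $D_0$; as $\gcd(x_1+1,x_1^{n+1})=1$ in $K[x_1]$, the pair $(a,b)$ is unimodular in $D_0$, hence in $D$. Because $sr(D)=1$, there is $c\in D$ with $a+cb\in U(D)$, and since $U(D)\subseteq U(\wt{D})$ the same $c$ witnesses that $(a,b)$ is stable in $\wt{D}$. Proposition~\ref{estesohmprop} then forces $(a^n,b)=\bigl((x_1+1)^n,\,x_1^{n+1}\bigr)$ to be stable in $D_0$, i.e.\ there is $u\in D_0$ with $(x_1+1)^n+u\,x_1^{n+1}\in U(D_0)=K^\times$. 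Reading this as a polynomial in $x_1$ over $K[x_2,\ldots,x_m]$, the term $u\,x_1^{n+1}$ contributes only in $x_1$-degrees $\geq n+1$, so the coefficient of $x_1^{n}$ equals $\binom{n}{n}=1$; as $n\geq 1$ the left-hand side cannot be a nonzero constant, which is the desired contradiction.

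The step I expect to be the real obstacle is securing separability of $F/F_0$, which is exactly what Proposition~\ref{estesohmprop} demands and what the authors flag as unclear in general. In characteristic zero every finite extension $F/F_0$ is automatically separable, so any Noether normalization will do; over an algebraically closed (more generally perfect) field one instead invokes the existence of a separating transcendence basis, \cite[Proposition~1.1.33]{Hulek}, to arrange $F/F_0$ separable. Outside these cases a separating normalization need not exist, which is precisely why the statement is restricted to these two families of base fields. By contrast, the production of the non-stable pair is uniform in the characteristic, since $(x_1+1)^n+u\,x_1^{n+1}$ can never be constant once $n\geq 1$.
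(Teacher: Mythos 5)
Your proof is correct, and it follows the same overall route as the paper---Noether normalization, separability of $F/F_0$ via characteristic zero or \cite[Proposition 1.1.33]{Hulek}, passage to the integral closure $\wt{D}\supseteq D$, descent through Proposition~\ref{estesohmprop}, and a degree argument in $D_0=K[x_1,\ldots,x_m]$---but with one substantive difference: the choice of unimodular pair, and yours is the one that actually works. The paper feeds the pair $(x_1^2,\,1-x_1)$ into Proposition~\ref{estesohmprop} and asserts that $x_1^{2n}+u(1-x_1)\in U(D_0)$ is impossible by a degree argument; but that pair is already stable in $D_0$ itself, since $x_1^2+(1+x_1)(1-x_1)=1$, and likewise $u=1+x_1+\cdots+x_1^{2n-1}$ gives $x_1^{2n}+u(1-x_1)=1$, so no contradiction can arise from it under the stability convention the paper itself states (the first coordinate is modified by a multiple of the second, which is also the convention forced by the norm proof of Proposition~\ref{estesohmprop}). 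Your pair $(x_1+1,\,x_1^{n+1})$, chosen \emph{after} the exponent $n$ is fixed---legitimately, since $n$ in Proposition~\ref{estesohmprop} depends only on the extension $D_0\subseteq\wt{D}$ and not on the pair---avoids this trap: in $(x_1+1)^n+u\,x_1^{n+1}$ the coefficient of $x_1^n$ equals $1$ regardless of $u$, so the expression never lies in $U(D_0)=K^\times$, and this works uniformly in every characteristic (relevant for the algebraically closed case in characteristic $p$). The remaining steps---unimodularity of your pair via Bezout in $K[x_1]$, stability in $\wt{D}$ from $sr(D)=1$ together with $U(D)\subseteq U(\wt{D})$, and the conclusion that $m=0$ forces $D$ to be module-finite over $K$ and hence, being a domain, a field---are all correctly justified. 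In short: same strategy as the paper, but your exploitation of the quantifier order in Proposition~\ref{estesohmprop} repairs a genuine defect in the paper's own degree argument.
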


\begin{proof}
By Noether normalization, there exists a polynomial ring $D_0=K[x_1,\ldots, x_m]$ that is a subring of $D$ with $m$ being the transcendental degree of $D$ over $K$ and $D$ being finitely generated as $D_0$-module. Hence $D$ is integral over $D_0$. Note that the polynomial ring $D_0$ is integrally closed. Let $F_0$ be the fraction field of $D_0$ and let $F$ be the fraction field of $D$. Since $D$ over $D_0$ is finitely generated, $F$ is a finite extension over $F_0$. Also, since $K$ has characteristic zero, $F_0$ has characteristic zero and hence $F$ is a separable extension of $F_0$. In case $K$ is algebraically closed, $F$ is separable over $F_0$ by \cite[Proposition 1.1.33]{Hulek}. Let $\overline{D_0}$ be the integral closure of $D_0$ in $F$. Since $D$ is integral over $D_0$, $D\subseteq \overline{D_0}$. 

Suppose $m\geq 1$. Since $(x_1^2, 1-x_1)$ is a unimodular sequence in $D_0$ and as  $D$ has stable range $1$, the sequence $(x_1^2, 1-x_1)$ is stable in $\overline{D_0}$. By Proposition \ref{estesohmprop}, there exists $n\geq 1$ such that $(x_1^{2n}, 1-x_1)$ is stable in $D_0$, i.e. there exist $u\in D_0$ with $x_1^{2n} + u(1-x_1)$ being a unit in $D_0=K[x_1, \ldots, x_m]$, which is impossible by a degree argument.
Thus $m=0$ and $D_0=K$, i.e. $D$ is algebraic over $K$ and hence a field.
\end{proof}

\begin{theorem} Let $R$ be a unit-central ring of stable range $1$. If $R$ is a finitely generated algebra over a field $K$ that has characteristic $0$ or is algebraically closed, then $R$ is commutative.
\end{theorem}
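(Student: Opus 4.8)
The plan is to reduce commutativity to the condition that every prime ideal is maximal, and then to invoke Theorem \ref{mainresult}(iv). First I would fix an arbitrary prime ideal $P$ of $R$ and pass to the factor ring $R/P$. By Theorem \ref{modjac}(1) this factor is again unit-central of stable range $1$, and by Theorem \ref{modjac}(6) it is a commutative integral domain. Since $R$ is a finitely generated $K$-algebra, so is every homomorphic image, so $R/P$ is an affine commutative domain over $K$.

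The second step is to apply Corollary \ref{consequence_EstesOhm} to $R/P$. By hypothesis $K$ has characteristic zero or is algebraically closed, and $R/P$ is an affine commutative domain over $K$ of stable range $1$; the corollary therefore forces $R/P$ to be a field. In particular $P$ is a maximal ideal of $R$. As $P$ was arbitrary, every prime ideal of $R$ is maximal, that is, $\mathcal{K}(R)=0$. Theorem \ref{mainresult}(iv) now gives that $R$ is commutative, completing the argument.

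The proof is short precisely because the difficult content has been isolated beforehand. The only point demanding care in the reduction is checking that the prime factor $R/P$ simultaneously satisfies \emph{all} the hypotheses of Corollary \ref{consequence_EstesOhm}: affineness is inherited from $R$, commutativity of the domain comes from Theorem \ref{modjac}(6), and stable range $1$ descends by Theorem \ref{modjac}(1). The genuine obstacle does not lie in this theorem at all, but inside Corollary \ref{consequence_EstesOhm}, whose proof rests on the Estes--Ohm Proposition \ref{estesohmprop} combined with Noether normalization and the separability of the induced function-field extension; that separability is exactly what we can guarantee in characteristic zero or over an algebraically closed field, and is what pins down the hypotheses on $K$ in the statement.
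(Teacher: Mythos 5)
Your proposal is correct and coincides with the paper's own argument: both pass to an arbitrary prime factor $R/P$, which is an affine commutative domain of stable range $1$ by Theorem \ref{modjac}, apply Corollary \ref{consequence_EstesOhm} to conclude $R/P$ is a field, and then invoke Theorem \ref{mainresult}(iv) since every prime ideal is maximal. Your added remarks on verifying the hypotheses of the corollary and on the role of separability are accurate but do not change the route.
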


\begin{proof} Any prime factor $R/P$ of $R$ is an affine commutative domain over a field $K$ of characteristic $0$ and has stable range $1$. By Corollary \ref{consequence_EstesOhm}, $R/P$ is a field. Thus any prime ideal of $R$ is maximal and by Lemma \ref{KrullZero}, $R$ is commutative.
\end{proof}

\section{Acknowledgement}
The first two named authors were partially supported by CMUP (UID/MAT/00144/2019), which is funded by FCT with national (MCTES) and European structural funds through the programs FEDER, under the partnership agreement PT2020.
A part of this work was done while the third named author visited the University of Porto. He would like to thank the University for hospitality and good working conditions.

\begin{bibdiv}
 \begin{biblist}
 
 \bib{Bass}{article}{
   author={Bass, H.},
   title={$K$-theory and stable algebra},
   journal={Inst. Hautes \'{E}tudes Sci. Publ. Math.},
   number={22},
   date={1964},
   pages={5--60},
   issn={0073-8301}
}

\bib{Deshpande}{article}{
   author={Deshpande, M. G.},
   title={Structure of right subdirectly irreducible rings. I},
   journal={J. Algebra},
   volume={17},
   date={1971},
   pages={317--325},
   issn={0021-8693}
}

\bib{Eisenbud}{book}{
   author={Eisenbud, D.},
   title={Commutative algebra with a view toward algebraic geometry},
   series={Graduate Texts in Mathematics},
   volume={150},
   publisher={Springer-Verlag, New York},
   date={1995},
   pages={xvi+785},
   isbn={0-387-94268-8},
   isbn={0-387-94269-6}
}

 \bib{EstesOhm}{article}{
   author={Estes, D.},
   author={Ohm, J.},
   title={Stable range in commutative rings},
   journal={J. Algebra},
   volume={7},
   date={1967},
   pages={343--362},
   issn={0021-8693}
}

\bib{GoodearlMenal}{article}{
   author={Goodearl, K. R.},
   author={Menal, P.},
   title={Stable range one for rings with many units},
   journal={J. Pure Appl. Algebra},
   volume={54},
   date={1988},
   number={2-3},
   pages={261--287},
   issn={0022-4049}
}

\bib{Hulek}{book}{
   author={Hulek, K.},
   title={Elementary algebraic geometry},
   series={Student Mathematical Library},
   volume={20},
   note={Translated from the 2000 German original by Helena Verrill},
   publisher={American Mathematical Society, Providence, RI},
   date={2003},
   pages={viii+213},
   isbn={0-8218-2952-1}
}

\bib{KhuranaMarksSrivastava}{article}{
   author={Khurana, D.},
   author={Marks, G.},
   author={Srivastava, A.K.},
   title={On unit-central rings},
   conference={
      title={Advances in ring theory},
   },
   book={
      series={Trends Math.},
      publisher={Birkh\"{a}user/Springer Basel AG, Basel},
   },
   date={2010},
   pages={205--212}
}

\bib{Lam}{book}{
   author={Lam, T. Y.},
   title={A first course in noncommutative rings},
   series={Graduate Texts in Mathematics},
   volume={131},
   edition={2},
   publisher={Springer-Verlag, New York},
   date={2001},
   pages={xx+385},
   isbn={0-387-95183-0}
  }

\bib{Lam_Lectures}{book}{
   author={Lam, T. Y.},
   title={Lectures on modules and rings},
   series={Graduate Texts in Mathematics},
   volume={189},
   publisher={Springer-Verlag, New York},
   date={1999},
   pages={xxiv+557},
   isbn={0-387-98428-3}
}

\bib{Lomp}{article}{
   author={Lomp, C.},
   title={A remark on a theorem of Y. Kurata},
   journal={Hokkaido Math. J.},
   volume={30},
   date={2001},
   number={3},
   pages={645--648},
   issn={0385-4035}
}

\bib{Martindale}{article}{
   author={Martindale, W. S., III},
   title={The structure of a special class of rings},
   journal={Proc. Amer. Math. Soc.},
   volume={9},
   date={1958},
   pages={714--721},
   issn={0002-9939}
}

\bib{McConnellRobson}{book}{
   author={McConnell, J. C.},
   author={Robson, J. C.},
   title={Noncommutative Noetherian rings},
   series={Graduate Studies in Mathematics},
   volume={30},
   edition={Revised edition},
   note={With the cooperation of L. W. Small},
   publisher={American Mathematical Society, Providence, RI},
   date={2001},
   pages={xx+636},
   isbn={0-8218-2169-5}
}

\bib{Nicholson}{article}{
   author={Nicholson, W. K.},
   title={Lifting idempotents and exchange rings},
   journal={Trans. Amer. Math. Soc.},
   volume={229},
   date={1977},
   pages={269--278},
   issn={0002-9947}
}

\bib{Stock}{article}{
   author={Stock, J.},
   title={On rings whose projective modules have the exchange property},
   journal={J. Algebra},
   volume={103},
   date={1986},
   number={2},
   pages={437--453},
   issn={0021-8693}
}

\bib{Suslin}{article}{
   author={Suslin, A. A.},
   title={The cancellation problem for projective modules and related
   topics},
   conference={
      title={Ring theory},
      address={Proc. Conf., Univ. Waterloo, Waterloo},
      date={1978},
   },
   book={
      series={Lecture Notes in Math.},
      volume={734},
      publisher={Springer, Berlin},
   },
   date={1979},
   pages={323--338}
}

\bib{Utumi}{article}{
   author={Utumi, Y.},
   title={On $\xi$-rings},
   journal={Proc. Japan Acad.},
   volume={33},
   date={1957},
   pages={63--66},
   issn={0021-4280}
}

\bib{Vaserstein}{article}{
   author={Vaser\v{s}te\u{\i}n, L. N.},
   title={The stable range of rings and the dimension of topological spaces},
   language={Russian},
   journal={Funkcional. Anal. i Prilo\v{z}en.},
   volume={5},
   date={1971},
   number={2},
   pages={17--27},
   issn={0374-1990}
}

\end{biblist}
\end{bibdiv}

\end{document}